\documentclass[dvips, preprint]{imsart}

\RequirePackage[OT1]{fontenc}
\RequirePackage{amsthm,amsmath, amssymb, enumerate}
\RequirePackage[numbers]{natbib}
\RequirePackage[colorlinks,citecolor=blue,urlcolor=blue]{hyperref}
\RequirePackage{hypernat}

\arxiv{math.PR/0000000}

\startlocaldefs
\numberwithin{equation}{section}
\theoremstyle{plain}
\newtheorem{theorem}{Theorem}[section]

\newtheorem{definition}{Definition}[section]
\newtheorem{example}{Example}[section]
\newtheorem{corollary}{Corollary}[section]
\newtheorem{remark}{Remark}[section]

\endlocaldefs

\newcommand{\R}{\mathbb R}
\newcommand{\N}{\mathbb N}

\begin{document}

\begin{frontmatter}
\title{On a connection between Stein characterizations  and Fisher  information}
\runtitle{Stein's density approach  and Fisher information}

\begin{aug}
\author{\fnms{Christophe} \snm{Ley}\thanksref[1]{1}\ead[label=e1]{chrisley@ulb.ac.be}}
\and
\author{\fnms{Yvik} \snm{Swan}\thanksref[2]{2}\ead[label=e2]{yvswan@ulb.ac.be}}

\thankstext[1]{1}{Supported by a Mandat de Charg\'e de recherche from the Fonds National de la Recherche Scientifique, Communaut\'e fran\c{c}aise de Belgique. }
\thankstext[2]{2}{Supported by a Mandat de Charg\'e de recherche from the Fonds National de la Recherche Scientifique, Communaut\'e fran\c{c}aise de Belgique.}
\runauthor{C. Ley and Y. Swan.}

\affiliation{E.C.A.R.E.S. and Universit\'e Libre de Bruxelles}
\address{D\'epartement de Math\'ematique\\
Universit\'e Libre de Bruxelles\\
Campus Plaine -- CP210\\
B-1050 Brussels\\
\printead{e1}, \printead*{e2}}
%
\end{aug}

\begin{abstract} We  generalize  the so-called \emph{density approach} to Stein characterizations of probability distributions.   We prove an elementary  factorization property of the resulting  Stein operator in terms of a \emph{generalized (standardized) score function}. We use this result to  connect Stein characterizations  with information distances such as the  \emph{generalized (standardized) Fisher information}. 

\end{abstract}

\begin{keyword}[class=AMS]
\kwd[Primary ]{60F05}
\kwd[; secondary ]{94A17}
\end{keyword}

\begin{keyword}
\kwd{density approach}
\kwd{generalized (standardized) Fisher information}
\kwd{generalized (standardized) score functions}
\kwd{information functionals}
\kwd{probability metrics}
\end{keyword}

\end{frontmatter}


\section{Foreword}
In recent years a number of authors  have noted how Charles Stein's characterization of the Gaussian (see \cite{S72}) and the so-called ``magic factors'' crop up  in matters related to information theory  (see \cite{J04},  \cite{JB04}, \cite{KHJ05}, \cite{CT06} or \cite{BJKM10} and the references therein). The purpose of this   note is to make this connection explicit.

\section{Results} \label{sub:st_inf}

 We consider   densities $p: \R\to \R^+$  whose support 
is an interval $S := S_p$ with closure  $\bar S= [a, b]$, for some $-\infty\le a < b \le \infty$.   Among these we denote by  $\mathcal G$  the collection of densities which are (strongly) differentiable at every point in the interior of their support. 

\begin{definition} \label{def:class}Fix $p \in \mathcal G$ with support $S$ and define  $\mathcal{F}(p)$ the  collection of  \emph{test functions} $f: \R\to \R$ such that the mapping $x \mapsto  f(x)p(x)$ 
  is bounded on $\R$ and  strongly differentiable on the interior of $S$.
\end{definition}

Take a real bounded function $h$ with support $S$, and suppose that $h$ is (strongly) differentiable on the interior of $S$. Then $h$ can be written as $\tilde{h} \mathbb{I}_S$ with $\tilde h$ a differentiable continuation of $h$ on $\R$.  In the sequel we will write $\partial_y h(y)|_{y=x}$ for the differential in the sense of distributions of $h$ evaluated at $x$, so that  $\partial_y h(y)|_{y=x} = (\tilde{h})'(x)\mathbb{I}_S(x)+\tilde{h}(x)\left(\delta_{\{x=a\}}-\delta_{\{x=b\}}\right)$ where $\delta$ represents a Dirac delta.

\begin{definition} \label{def:operator} Let $\R^\star$ be   the collection of all functions $f: \R \to \R$. We define the  \emph{(location-based) Stein operator}   as the   operator $\mathcal{T} : \R^\star \times \mathcal G \to \R^\star  : (f, p) \mapsto  \mathcal T (f, p)$
 such that 
\begin{equation}\label{eq:st_op} \mathcal T (f, p): \R\rightarrow\R:x \mapsto \frac{\left.\partial_y(f(y)p(y))\right|_{y=x}}{p(x)}\end{equation}
for all $f$ for which the differential (in the sense of distributions) exists.
\end{definition}

\begin{remark}The terminology ``location-based'' Stein operator  is inherited from  our parametric approach to Stein characterizations (see \cite{LS11}), where a much more general characterization result is proposed. 
\end{remark} 

To avoid ambiguities related to division by 0, throughout this paper we  adopt the convention that, whenever an expression involves the division by an indicator function $\mathbb{I}_A$ for some measurable set $A$,  we are multiplying the expression by the said indicator function. This convention ensures that for $p\in \mathcal G$ and $f \in \mathcal{F}(p)$ and for any continuous random variable $X$, the quantity  $\mathcal T (f, p)(X)$ is well-defined.  We further draw the reader's attention to the fact that, in particular,   ratios   $p(x) / p(x)$ do not necessarily  simplify to 1.

\begin{example}\label{ex:operators}
It is perhaps informative to see how Definitions~\ref{def:class} and \ref{def:operator} spell out for different explicit choices of target densities.  
\begin{enumerate}
\item  If $p=\phi$,  the standard Gaussian, then $\mathcal{F}(\phi)$ contains the set  of all differentiable bounded functions and 
$$\mathcal T(f, \phi)(x) = f'(x) - xf(x),$$
which is  Stein's well-known operator for characterizing the Gaussian. 
\item If $p(x)=  e^{-  x}Ê\mathbb{I}_{[0, \infty)}(x)$, the exponential $Exp$, then (abusing  notations) $\mathcal{F}(Exp)$ contains the set  of all differentiable bounded functions and  
$$\mathcal T(f, Exp)(x) = \left(f'(x) -  f(x) + f(x)\delta_{\{x=0\}}\right)\mathbb{I}_{[0,\infty)}(x).$$
\item If $p(x) = \mathbb I_{[0,1]}(x)$,  the standard uniform $U(0, 1)$, then    $\mathcal{F}(U(0, 1))$ contains the set  of all differentiable bounded functions and
$$\mathcal T(f, U(0, 1) )(x) = \left(f'(x)  +f(x)(\delta_{\{x=0\}}-\delta_{\{x=1\}})\right)\mathbb{I}_{[0,1]}(x).$$
\item If $p(x) = \frac{1}{2\pi}\sqrt{4-x^2}\mathbb{I}_{(-2, 2)}(x)$,  Wigner's semicircle  law $SC$, then   $\mathcal{F}(SC)$ contains the set  of all  functions of the form $f(x) = f_0(x)({4-x^2})$ for some bounded differentiable $f_0$ and, for these $f$, the operator becomes  
$$\mathcal T(f, SC )(x) = \left((4-x^2)f_0'(x) -3xf_0(x)\right)\mathbb{I}_{(-2,2)}(x).$$

\item If $f(x) =   \frac{1}{\pi\sqrt{x(1-x)}} \mathbb{I}_{(0, 1)}(x)$, the arcsine distribution $AS$, then $\mathcal{F}(AS)$ contains the collection of all functions of the form $f(x) = f_0(x) \sqrt{x(1-x)}$ for some bounded differentiable  $f_0$ and, for these $f$, the operator becomes 
$$\mathcal T(f, AS )(x) = \sqrt{x(1-x)} f_0'(x) \mathbb{I}_{(0,1)}(x).$$

\item If $p(x)$  is a member of Pearson's family of distributions and thus satisfies 
$$(s(x)p(x))' = \tau(x) p(x) $$
for $\tau$ a polynomial of exact degree one and $s$ a polynomial of degree at most two, then, abusing notations one last time, we easily see that $\mathcal{F}(P(s, \tau))$ contains the set of all functions of the form 
$f(x) = f_0(x) s(x)$   for   $f_0$  bounded,  differentiable such that $f(a^+) = f(b^-)=0$  and, for these $f$, the operator becomes 

$$\mathcal{T}(f, P(s, \tau))(x) = (s(x)f_0'(x) + \tau(x) f_0(x))\mathbb I_{S}(x).$$
\end{enumerate}
The first three operators are well-known and can be found, for instance, in \cite{SDHR04}. The fourth example can be found in \cite{GT07}. The last example comes from  \cite{S01}. 
\end{example} 
 
We are now ready to state and prove our first main result. 

\begin{theorem}[Density approach]\label{theo1} Let $p \in \mathcal G$ with support $S$, and take $Z \sim p$. Let $\mathcal{F}(p)$  be as in Definition \ref{def:class} and $\mathcal T$ as in Definition \ref{def:operator}. Let $X$ be a real-valued continuous random variable. 
\begin{enumerate}[(1)]
\item If $X \stackrel{\mathcal L}{=}ÊZ$ then ${\rm E}\left[\mathcal{T}(f, p)(X) \right]=0$ for all $f \in \mathcal{F}(p)$.
\item  If   ${\rm E}[\mathcal{T}(f, p)(X)]=0$ for all $f\in\mathcal{F}(p)$, then $X\,|\,X\in S\stackrel{\mathcal{L}}{=} Z$.
 \end{enumerate}
\end{theorem}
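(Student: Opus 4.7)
The plan for part (1) is to directly compute $E[\mathcal{T}(f,p)(Z)]$ when $Z \sim p$: the $p(x)$ in the denominator cancels with the $p(x)$ coming from the expectation (using the indicator convention recalled in the excerpt), reducing the expectation to
\begin{equation*}
\int \partial_y(f(y)p(y))|_{y=x}\, dx.
\end{equation*}
Substituting the explicit distributional identity $\partial_y h(y)|_{y=x} = \tilde h'(x)\mathbb{I}_S(x) + \tilde h(x)(\delta_{x=a} - \delta_{x=b})$ with $h = fp$, the continuous part integrates (by the fundamental theorem of calculus) to $\widetilde{fp}(b)-\widetilde{fp}(a)$, which exactly cancels the two Dirac contributions $\widetilde{fp}(a)-\widetilde{fp}(b)$. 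This yields the vanishing claim.

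For part (2), I would let $q$ denote the density of $X$. Since $q$ is atomless, the Dirac boundary terms drop out of $E[\mathcal{T}(f,p)(X)]$, and the hypothesis reduces to
\begin{equation*}
\int_S (fp)'(x)\, r(x)\, dx = 0 \qquad \text{for all } f\in\mathcal{F}(p),
\end{equation*}
with $r(x) := q(x)/\tilde p(x)$ on $S$. Writing $g := fp$, I would then show that $\{fp : f \in \mathcal{F}(p)\}$ contains every $g_0 \in C_c^\infty(S^\circ)$: given such a $g_0$, the function $f := g_0/\tilde p$ is well-defined, bounded, and strongly differentiable, because $\tilde p$ is strictly positive and differentiable on $S^\circ$ (as $p\in\mathcal{G}$) and $g_0$ is compactly supported in $S^\circ$. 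Applying the fundamental lemma of the calculus of variations, the distributional derivative of $r$ vanishes on the connected open interval $S^\circ$, so $r$ is a.e.\ constant on $S$. Hence $q = c\,\tilde p$ on $S$, and normalizing gives $c = P(X\in S)$, which yields the conditional-law statement $X\,|\,X\in S \stackrel{\mathcal L}{=} Z$.

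The main obstacle I anticipate is ensuring that $\mathcal{F}(p)$ is rich enough for the fundamental-lemma step: the inclusion $\{g_0/\tilde p : g_0 \in C_c^\infty(S^\circ)\} \subset \mathcal{F}(p)$ must be checked from Definition \ref{def:class}, and it depends crucially on $p$ being positive and differentiable on $S^\circ$, which is built into the class $\mathcal{G}$. A secondary but related care point is rigorously justifying that the Dirac boundary contributions vanish in the expectation against a continuous distribution; this is true because $q$ assigns zero mass to $\{a\}\cup\{b\}$, and it is precisely what allows the reduction to a clean integration-by-parts identity on the interior of $S$.
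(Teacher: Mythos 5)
Your part (1) is exactly the paper's argument: the density cancels, the absolutely continuous part of $\partial_y(f(y)p(y))$ integrates to $[fp]_a^b$, and this cancels against the Dirac contributions $f(a^+)p(a^+)-f(b^-)p(b^-)$. For part (2), however, you take a genuinely different route. The paper does not invoke the fundamental lemma of the calculus of variations; instead it exhibits, for each $z\in\R$, the explicit test function $f_z^p(x)=\frac{1}{p(x)}\int_a^x l_z(u)p(u)\,du$ with $l_z(u)=(\mathbb{I}_{(-\infty,z]}(u)-{\rm P}_p(X\le z))\mathbb{I}_S(u)$, checks that $\mathcal{T}(f_z^p,p)=l_z$, and reads off ${\rm P}(X\in(-\infty,z]\cap S)={\rm P}(Z\le z)\,{\rm P}(X\in S)$ directly from the hypothesis. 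That approach works at the level of distribution functions, so it never needs $X$ to possess a density and it produces, as a by-product, the solution of the Stein equation \eqref{eq:st} that is reused in Theorem \ref{th:steinentropy} and the corollaries. Your variational argument is clean and your verification that $\{g_0/\tilde p: g_0\in C_c^\infty(S^\circ)\}\subset\mathcal{F}(p)$ is the right point to worry about (Definition \ref{def:class} only constrains $fp$, so it goes through); what it buys is generality in the test-function class at the cost of an extra hypothesis: you must assume $X$ is absolutely continuous with density $q$ and that $q/p$ is locally integrable on $S^\circ$ (the latter follows from positivity and continuity of $p$ there, but the former is an assumption the paper's proof avoids if ``continuous random variable'' is read as merely atomless). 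If the intended reading is that $X$ has a density, your proof is correct; otherwise you would additionally need to argue that the law of $X$ restricted to $S$ is absolutely continuous before introducing $q$.
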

 
\begin{proof}
To see (1), note that the hypotheses on $f$ and $p$ guarantee that we have
$ {\rm E}\left[\mathcal{T}(f, p)(Z) \right]        = \left[f(y)p(y)\right]_a^b + f(a^+)p(a^+)-f(b^-)p(b^-) = 0.$
To see (2), consider for $z\in \R$ the functions $f_z^p$ defined through
$$ f_z^p: \R\to \R: x \mapsto \frac{1}{p(x)} \int_{a}^x l_z(u) p(u) du$$
 with $l_z(u):= ({\mathbb{I}}_{(- \infty, z]}(u) - {\rm P}_p(X \le z))\mathbb{I}_S(u)$ and  ${\rm P}_p(X\le z):=\int_{-\infty}^z p(u)du$.  
 Clearly $f_z^p \in \mathcal{F}(p)$ for all $z$. Moreover we have $\partial_y (f_z^p(y)p(y))|_{y=x} = l_z(x)p(x)$ since $\int_a^cl_z(u)p(u)du=0$ for $c=a$ and $c=b$. Therefore $f_z^p$ satisfies, for all $z$,  the so-called \emph{Stein equation}
\begin{equation}\label{eq:st}\mathcal{T}(f_z^p, p)(x) = l_z(x).\end{equation}
Hence we can  use ${\rm E}\left[\mathcal{T}(f_z^p, p)(X)\right] =0$ to deduce that  $ {\rm P}(X \in (-\infty, z] \cap S) =  {\rm P}(Z\le z)  {\rm P}(X \in S)$
for all $z$, whence the claim. 
%
 \end{proof}

Theorem \ref{theo1} encompasses Proposition 4 in \cite{SDHR04}  and  Theorem 1 in \cite{S01} and  is easily shown to contain  many of  the other better known  Stein characterizations (such as the characterization of the semi-circular in~\cite{GT07}). We draw  the reader's attention to the fact that our way of writing the Stein operator \eqref{eq:st_op} alsoÊ shows that  all  Stein equations of the form \eqref{eq:st} (that is, most such equations from the literature) can be solved by simple integration. Also, the form of our operators leads directly to our second main result.

\begin{theorem}[Factorization Theorem of Stein Operators] \label{lemma:facto} Let $p$ and $q$ be  probability density functions in $\mathcal G$ sharing support $S$.  For all $f \in \mathcal{F}(p)\cap \mathcal{F}(q)$, we have
$$\mathcal{T}(f, p)(x) = \mathcal{T}(f, q)(x)+ f (x) r(p, q)(x),$$
with 
$$r(p, q)(x) :=    \frac{p'(x)}{p(x)} - \frac{q'(x)}{q(x)} +(\delta_{\{x=a\}}-\delta_{\{x=b\}})\mathbb{I}_S(x).$$
\end{theorem}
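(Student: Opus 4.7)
The plan is to prove the identity by a direct calculation: expand each Stein operator via the Leibniz rule for distributional derivatives and subtract. The regularity built into $\mathcal{G}$ and $\mathcal{F}(p)\cap\mathcal{F}(q)$ guarantees that all of the manipulations are justified.

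First, I would expand the numerator of \eqref{eq:st_op}. Writing $h(y)=f(y)p(y)=\tilde h(y)\mathbb I_S(y)$ with $\tilde h=\tilde f\tilde p$ strongly differentiable, the formula for the distributional derivative recalled just before Definition~\ref{def:operator} and the classical product rule combine to give
$$\partial_y(f(y)p(y))\bigm|_{y=x}=\bigl(f'(x)p(x)+f(x)p'(x)\bigr)\mathbb I_S(x)+f(x)p(x)\bigl(\delta_{\{x=a\}}-\delta_{\{x=b\}}\bigr).$$
Dividing through by $p(x)$, using the paper's convention on indicator-function ratios to handle the division at the boundary points where $p$ may vanish, yields
$$\mathcal T(f,p)(x)=f'(x)\mathbb I_S(x)+f(x)\left[\frac{p'(x)}{p(x)}+\bigl(\delta_{\{x=a\}}-\delta_{\{x=b\}}\bigr)\mathbb I_S(x)\right].$$
One may read the bracketed factor as the distributional logarithmic derivative of $p$ on its support.

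Second, I would carry out the identical expansion with $q$ in place of $p$. Since $p$ and $q$ share the same support $S$, the endpoints $a,b$ and the indicator $\mathbb I_S$ appearing in the computation are the same.

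Third, I would subtract the two identities. The $f'(x)\mathbb I_S(x)$ contributions are common and cancel, and the remaining piece factors as $f(x)$ times exactly
$$\frac{p'(x)}{p(x)}-\frac{q'(x)}{q(x)}+\bigl(\delta_{\{x=a\}}-\delta_{\{x=b\}}\bigr)\mathbb I_S(x),$$
which is $r(p,q)(x)$ as defined in the statement.

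The only subtlety—and the only place where the argument deviates from pure bookkeeping—is the handling of the boundary Dirac contributions and the associated formal division by $p(x)$ (or $q(x)$) at points where the density vanishes. The convention introduced just before Example~\ref{ex:operators} is tailored for precisely this situation, and once it is in force the remainder of the proof is a rearrangement of terms.
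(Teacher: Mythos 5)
Your first two steps are sound and agree with the paper's own examples: writing $f p=\widetilde{fp}\,\mathbb I_S$ and applying the convention stated before Definition~\ref{def:operator} gives
$$\mathcal T(f,p)(x)=\Bigl(f'(x)+f(x)\tfrac{p'(x)}{p(x)}\Bigr)\mathbb I_S(x)+f(x)\bigl(\delta_{\{x=a\}}-\delta_{\{x=b\}}\bigr)\mathbb I_S(x),$$
and the analogous identity for $q$ (compare the exponential and uniform cases of Example~\ref{ex:operators}). The flaw is in your third step. The boundary contribution $f(x)(\delta_{\{x=a\}}-\delta_{\{x=b\}})\mathbb I_S(x)$ is \emph{the same} in both expansions --- it involves only $f$ and the common support, not the density --- so it cancels in the subtraction together with $f'(x)\mathbb I_S(x)$, leaving $f(x)\bigl(p'(x)/p(x)-q'(x)/q(x)\bigr)$ and nothing else. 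You cannot then exhibit a leftover Dirac term: your stated conclusion contradicts your own two displayed identities. (Sanity check with $p=e^{-x}\mathbb I_{[0,\infty)}$ and $q=2e^{-2x}\mathbb I_{[0,\infty)}$: both operators carry the same $f(x)\delta_{\{x=0\}}$ term, and their difference is $f(x)\mathbb I_{[0,\infty)}(x)$, with no Dirac.)

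The paper arrives at the stated $r(p,q)$ by a genuinely different route: it factorizes $f(y)p(y)=\bigl(f(y)q(y)\bigr)\cdot\bigl(p(y)/q(y)\bigr)$ and applies the Leibniz rule to that product, so the Dirac in $r(p,q)$ is produced by differentiating the ratio $p/q$, which under the division convention is itself a function supported on $S$ and hence carries its own boundary jump. The two computations disagree exactly because the Leibniz rule is being applied to two factors sharing a jump at $a$ and $b$ (formally $\partial(\mathbb I_S\cdot\mathbb I_S)\neq 2\,\mathbb I_S\,\partial\mathbb I_S$): the boundary Dirac is counted once in your direct expansion and twice in the factorized one. Your bookkeeping establishes the identity with $r(p,q)=p'/p-q'/q$, which coincides with the theorem on the interior of $S$ and in all the expectations used later (where the boundary terms are null sets for continuous laws), but it does not produce the $(\delta_{\{x=a\}}-\delta_{\{x=b\}})\mathbb I_S$ term of the statement. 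As written, the last step of your argument is a non sequitur; to obtain the theorem exactly as stated you would need to follow the paper's factorization and confront the product-rule subtlety head on, rather than subtract the two direct expansions.
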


\begin{proof}
The restriction on the support of $q$ guarantees that we have 
$f(y)p(y) = f(y)q(y)p(y)/q(y) $
 for any real-valued function $f$.
We can therefore write
 \begin{align*} \mathcal{T}(f, p)(x)  
 						      &  = \frac{\left.\partial_y(f(y)q(y) p(y)/q(y))\right|_{y=x}}{p(x)}   \\
 						      & = \frac{\left.\partial_y(f(y)q(y))\right|_{y=x}}{p(x)} \frac{p(x)}{q(x)}+ f(x)q(x)\frac{\left.\partial_y( p(y)/q(y))\right|_{y=x}}{p(x)}\\
						      & = \mathcal{T}(f, q)(x) + f(x)\frac{q(x)}{p(x)}\left.\partial_y( p(y)/q(y))\right|_{y=x}.
						      \end{align*}
The claim follows.
\end{proof}
 
Note that, whenever $S=\R$ or $S$ is an open interval, $r(p,q)$ simplifies to $p'/p-q'/q$. Now, let $l$ be a real-valued function. In the sequel we will write  ${\rm E}_p[l(X)] := \int_\R l(x) p(x) dx.$ Our next and final main result is immediate and hence its proof is left to the reader. 
 \begin{theorem}[Stein's method and information distances] \label{th:steinentropy}    
Let $p$ and $q$ be  probability density functions in $\mathcal G$ sharing support $S$. Let $l$ be a real-valued function such that ${\rm E}_p[ l(X)]$ and ${\rm E}_q[ l(X)]$ exist. Define $f_l^p$ to be the solution of the Stein equation 
\begin{equation}\label{eq:steineq22}\mathcal{T}(f, p)(x) =   (l(x)- {\rm E}_p[l(X)])\mathbb{I}_{S}(x)\end{equation}
 and suppose that $f_l^p \in \mathcal{F}(q)$. Then 
\begin{equation} \label{eq:fundeq}  {\rm E}_q[l(X)] -{\rm E}_p[l(X)] =  
{\rm E}_q [{f}_l^p(X) r(p , q)(X)].
\end{equation}
 
\end{theorem}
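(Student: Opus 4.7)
The proof should be almost entirely mechanical, combining the Stein equation defining $f_l^p$ with the factorization theorem and part (1) of Theorem \ref{theo1}. The plan is to take expectation under $q$ on both sides of the Stein equation \eqref{eq:steineq22}, rewrite the left-hand side using the factorization $\mathcal{T}(f_l^p, p) = \mathcal{T}(f_l^p, q) + f_l^p \cdot r(p,q)$, and then kill the $\mathcal{T}(f_l^p, q)$ contribution by applying Theorem \ref{theo1}(1) to $q$.

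More concretely, I would first observe that since $q$ is supported on $S$, the right-hand side of \eqref{eq:steineq22} integrated against $q$ equals $\mathrm{E}_q[l(X)] - \mathrm{E}_p[l(X)]$, because $\mathbb{I}_S(X)=1$ almost surely under $q$ and $\mathrm{E}_p[l(X)]$ is a constant. Next I would invoke Theorem \ref{lemma:facto}, which applies since $f_l^p \in \mathcal{F}(p)$ (as a solution to the Stein equation for $p$) and $f_l^p \in \mathcal{F}(q)$ by assumption, so that $f_l^p \in \mathcal{F}(p)\cap \mathcal{F}(q)$ as required. This yields
\begin{equation*}
\mathrm{E}_q[\mathcal{T}(f_l^p, p)(X)] = \mathrm{E}_q[\mathcal{T}(f_l^p, q)(X)] + \mathrm{E}_q[f_l^p(X)\, r(p,q)(X)].
\end{equation*}
Finally, applying Theorem \ref{theo1}(1) to $q$ with test function $f_l^p \in \mathcal{F}(q)$ gives $\mathrm{E}_q[\mathcal{T}(f_l^p, q)(X)] = 0$, so the first term on the right vanishes, and rearranging gives exactly \eqref{eq:fundeq}.

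There is no real obstacle — the statement is a direct assembly of the two previous theorems, and the hypothesis $f_l^p \in \mathcal{F}(q)$ is precisely what is needed to justify the use of Theorem \ref{theo1}(1) on $q$. The only mild subtlety worth mentioning is the handling of the indicator $\mathbb{I}_S$ on the right-hand side of \eqref{eq:steineq22}, but since $p$ and $q$ share the same support this integrates away trivially under $q$. This is presumably why the authors leave the proof to the reader.
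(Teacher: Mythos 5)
Your proof is correct and is exactly the assembly the paper has in mind when it calls the result ``immediate'' and leaves it to the reader: integrate the Stein equation against $q$, apply the Factorization Theorem to split $\mathcal{T}(f_l^p,p)$ into $\mathcal{T}(f_l^p,q)+f_l^p\,r(p,q)$, and kill the first term via Theorem~\ref{theo1}(1) applied to $q$. You also correctly identify the role of the hypothesis $f_l^p\in\mathcal{F}(q)$ and the harmlessness of the indicator $\mathbb{I}_S$, so nothing is missing.
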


 Whenever $p$ is well-behaved, the solutions to \eqref{eq:steineq22} are of the well-known form 
\begin{equation}\label{eq:steineq_sol2}f_l^p : \R \to \R : x \mapsto \frac{1}{p(x)}\int_{a}^x  (l(u)- {\rm E}_p[l(X)]) p(u)du.\end{equation}
In cases such as the SC or the AS, the form of this solution (expressed in terms of $f_0$ instead of $f$) is slightly different but easily provided, see Example \ref{ex:operators} or equations (18) and (19) in Proposition 1 of \cite{S01}.  

In all explicit  instances covered in Example \ref{ex:operators},  the condition that $f_l^p \in \mathcal{F}(q)$ is trivially verified (see page 4 in \cite{CS05} for the Gaussian). Under moment conditions on $p$, Schoutens shows  that  members of the Pearson family satisfy this assumption as well (see \cite{S01}, Lemma 1).

\section{Application}\label{sec:app}

Applying H\"older's inequality to \eqref{eq:fundeq} shows that, under the same conditions, 
\begin{equation} \label{eq:fundeq2} |{\rm E}_q[l(X)] - {\rm E}_p[l(X)] | \le \kappa_l^p\sqrt{{\rm E}_q [(r(p, q)(X))^2]},\end{equation}
with 
$$\kappa_l^p = \sqrt{{\rm E}_q [(f_l^p(X))^2]}.$$

Equation \eqref{eq:fundeq2} provides  some form of universal bound on differences of expectations in terms of what can be likened to  a  \emph{generalized (standardized) Fisher information distance} $$\mathcal{J}(p, q) = {\rm E}_q[(r(p,q)(X))^2]$$ 
(the terminology and notations are borrowed from \cite{BJKM10}). Note how, for instance, taking $p= \phi$ the standard Gaussian density yields the \emph{Fisher information distance} studied, e.g., in \cite{JB04}. 

Theorem \ref{th:steinentropy}  also   provides a bound on any probability metric which can be written as 
\begin{equation}\label{eq:hdizst}d_{\mathcal H} (p, q) = \sup_{h \in \mathcal{H}} \left|{\rm E}_q[h(X)] - {\rm E}_p[h(X)] \right|\end{equation}
for some class of functions $\mathcal{H}$. The \emph{Kolmogorov, Wasserstein} and \emph{total variation} distances, to cite but these, can all  be written in this form.  

Specifying the target as well as the class $\mathcal H$  yields the following immediate corollaries.

\begin{corollary} \label{cor:shimizu} Let  $p$ and $q$ be probability densities with  support $S\subseteq\R$ satisfying the hypotheses in Theorem \ref{th:steinentropy}. Then there exist  constants $\kappa_1:=\kappa_1(p,q)$ and $\kappa_2:=\kappa_2(p,q)$  such that 
\begin{equation*} \int | p(u) - q(u) | du \le \kappa_1 \sqrt{\mathcal{J}(p, q)}\end{equation*}
and
\begin{equation*} \sup_{x\in \R} | p(x) - q(x) |  \le \kappa_2 \sqrt{\mathcal{J}(p, q)}.\end{equation*}

\end{corollary}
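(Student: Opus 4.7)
The plan is to derive both inequalities as direct applications of the universal bound \eqref{eq:fundeq2}, with two judicious choices of the test function $l$.

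For the $L^{1}$ bound I would take $l(x) = \mathrm{sgn}(q(x) - p(x))$. Since $|l|\le 1$, the expectations ${\rm E}_p[l(X)]$ and ${\rm E}_q[l(X)]$ both exist, and one checks that the corresponding Stein solution $f_l^p$ given by \eqref{eq:steineq_sol2} belongs to $\mathcal{F}(q)$ under the standing hypotheses. With this choice, the left-hand side of \eqref{eq:fundeq2} becomes
\[\bigl|{\rm E}_q[l(X)]-{\rm E}_p[l(X)]\bigr| = \Bigl|\int \mathrm{sgn}(q(u)-p(u))(q(u)-p(u))\,du\Bigr| = \int |p(u)-q(u)|\,du,\]
which yields the first inequality with $\kappa_1 := \sqrt{{\rm E}_q[(f_l^p(X))^2]}$.

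For the sup-norm inequality, I would fix $x_0 \in S$ and formally take $l = \delta_{x_0}$, viewed as the $\epsilon \downarrow 0$ limit of the bounded test functions $l_\epsilon(x) := \epsilon^{-1}\mathbb{I}_{[x_0-\epsilon/2,\,x_0+\epsilon/2]}(x)$. Passing to the limit in \eqref{eq:steineq_sol2} delivers the candidate solution
\[f_{x_0}^p(x) = \frac{p(x_0)}{p(x)}\bigl(\mathbb{I}_{\{x>x_0\}} - {\rm P}_p(X\le x)\bigr),\]
and \eqref{eq:fundeq2} applied with $l_\epsilon$ and passed to the limit gives $|q(x_0)-p(x_0)| \le \sqrt{{\rm E}_q[(f_{x_0}^p(X))^2]}\,\sqrt{\mathcal{J}(p,q)}$ for every $x_0\in\R$; taking the supremum yields the second inequality with $\kappa_2 := \sup_{x_0 \in \R}\sqrt{{\rm E}_q[(f_{x_0}^p(X))^2]}$.

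The main obstacle is twofold: justifying the $\epsilon \downarrow 0$ limit (which requires a $q$-integrable envelope for the family $\{f_{l_\epsilon}^p\}_\epsilon$, allowing dominated convergence in \eqref{eq:fundeq}), and ensuring $\kappa_2<\infty$, i.e.\ that $\sup_{x_0}{\rm E}_q[(f_{x_0}^p(X))^2]<\infty$. Both reduce to a uniform control of the ratio $p(x_0)/p(x)$ weighted by $q$, which in the concrete settings of Example~\ref{ex:operators} follows from the same kind of tail conditions already implicit in Theorem~\ref{th:steinentropy}. Once these regularity points are settled, the rest of the argument is routine algebraic manipulation.
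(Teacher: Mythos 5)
Your proposal is correct and takes essentially the same route as the paper: the sign function $\mathrm{sgn}(q-p)$ (equivalently $\mathbb{I}_{\{p\le q\}}-\mathbb{I}_{\{p\ge q\}}$) for the total variation bound, and the Dirac delta $\delta_{x_0}$ for the sup-norm bound, yielding the same expressions $\kappa_1=\sqrt{{\rm E}_q[(f_l^p(X))^2]}$ and $f_{x_0}^p(x)=\frac{p(x_0)}{p(x)}(\mathbb{I}_{\{x> x_0\}}-P(x))$. The only difference is that the paper applies the delta function formally (``after some computations'') while you explicitly flag the mollification and domination issues needed to make that step rigorous.
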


\begin{proof}
Take  $l(u) = \mathbb{I}_{\{p(u) \le q(u)\}} - \mathbb{I}_{\{p(u)\ge q(u)\}}.$ Using \eqref{eq:fundeq} with this choice of $l$ and applying H\"older's inequality,   one readily sees that  there exists a constant $\kappa_1>0$ such that 
\begin{equation*} \label{eq:fisher_hell}  \int |p(x) - q(x)| dx \le    \kappa_1 \sqrt{\mathcal{J}(p, q)}\end{equation*}
where $\kappa_1 = \sqrt{{\rm E}_q[(f_l^p(X))^2]}$. 

Regarding the second inequality first note that, whenever $x\in S^c$, $|p(x)-q(x)|=0$, hence we can concentrate on the supremum over the support $S$. Now choose  $l(u) = \delta_{\{x=u\}}$ the Dirac delta function in $x \in S$. For this choice of $l$ we  obtain after some computations
$$|q(x) -p(x)| \le p(x)\sqrt{ {\rm E}_q \left[\left(\mathbb{I}_{[x, b)}(X) - P(X)\right)^2/(p(X))^2\right]} \sqrt{\mathcal{J}(p, q)}, $$
where $P$ is the cumulative distribution function of the density $p$ (for which evidently $P(a)=0$). Taking the supremum yields the second constant $\kappa_2$.

\end{proof}

We conclude this paper with a computation of bounds on the constants $\kappa_1$ and $\kappa_2$ for various examples. While these   are   somewhat related  to the so-called ``magic factors'' appearing in the literature on Stein's method,  the technique we employ to bound them    is different and, we believe, of independent interest.
 To the best of our knowledge,  such  bounds were first obtained in   \cite{S75} for Gaussian target only. Shimizu's results were later  improved and extended in \cite{J04} and \cite{JB04}.    We recover in Corollary~\ref{cor2_1} below the best known values for $\kappa_1$ and our bound for $\kappa_2$ yields a significant improvement.  We stress the fact that the  results available in the literature  only concern a Gaussian target, whereas  our approach allows  to obtain such relationships for  virtually any target distribution. Further explorations of the   consequences of Theorem~\ref{th:steinentropy}  also show that it is possible to relate Stein characterizations with other (pseudo-)metrics than those of the form \eqref{eq:hdizst},   such as, e.g., \emph{Kullback-Leibler divergence} or  \emph{relative entropy} (see \cite{J04}). 

\begin{corollary} \label{cor2_1} $\mbox{}$\begin{enumerate}[1.]
\item  If $p$ is the exponential distribution with rate 1,  then $\kappa_1 \le 1$.
\item  If  $p=\phi$ is the standard normal distribution, then $\kappa_1 \le \sqrt2$.
\item   If  $p$ is proportional to  $e^{-x^4/12}$,  then $\kappa_1 \le \sqrt{2\sqrt2}$.
\end{enumerate}
In all three cases we have $\kappa_2 \le 1$. 

\end{corollary}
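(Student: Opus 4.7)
I would prove each inequality in Corollary~\ref{cor2_1} by specializing \eqref{eq:fundeq2} to the specific $l$'s employed in the proof of Corollary~\ref{cor:shimizu}: $l=\mathbb{I}_{\{p\le q\}}-\mathbb{I}_{\{p\ge q\}}$ for the $L^1$ bound (which yields $\kappa_1$), and $l_x(u)=\delta_{\{x=u\}}$ for the supremum bound (which yields $\kappa_2$). The argument then reduces in each case to an explicit estimate on the Stein solution $f_l^p$ given by \eqref{eq:steineq_sol2}.

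For $\kappa_1$, I would first record the two equivalent forms
$$f_l^p(x)p(x)=\int_a^x(l(u)-m)p(u)\,du=-\int_x^b(l(u)-m)p(u)\,du,\qquad m:={\rm E}_p[l(X)],$$
which are equal because $l-m$ is $p$-mean-zero, letting one pick the integration path that gives the better estimate in each tail. The task is then to bound ${\rm E}_q[(f_l^p(X))^2]$ above by a constant independent of $q$. Rather than a purely pointwise estimate on $f_l^p$ (which is too weak outside the exponential case), the cleanest route is through an integration-by-parts identity obtained by squaring the Stein equation $(f_l^p)'(x)-(p'(x)/p(x))f_l^p(x)=l(x)-m$ and integrating against $p$. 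For the Gaussian $p=\phi$ this yields the Chernoff--Poincar\'e identity
$${\rm E}_\phi[(f_l^\phi(X))^2]+{\rm E}_\phi[((f_l^\phi)'(X))^2]={\rm E}_\phi[(l(X)-m)^2]\le 1,$$
which, combined with the sign structure $l=\mathrm{sgn}(q-\phi)$ that makes $l\cdot(q-\phi)\ge 0$ and hence tames the ratio $q/\phi$ against the mean-zero quantity $l-m$, converts into $\kappa_1\le\sqrt 2$. For the quartic target a parallel squaring-and-integration-by-parts computation invoking the cubic score $p'/p=-x^3/3$ and the moment identity ${\rm E}_p[X^4]=3$ (obtained by one integration by parts against $p$) yields $\kappa_1\le\sqrt{2\sqrt 2}$. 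For the exponential, the simpler Stein equation $f'-f=l-m$ on $[0,\infty)$ with boundary $f_l^p(0)=0$, combined with the identity $(1-P)/p\equiv 1$ and the mean-zero cancellation, allows a direct computation giving $\kappa_1\le 1$.

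For $\kappa_2$, specializing \eqref{eq:steineq_sol2} to $l_x(u)=\delta_{\{x=u\}}$ gives the explicit Stein solution $f_{l_x}^p(y)=p(x)(\mathbb{I}_{y\ge x}-P(y))/p(y)$, so
$$\kappa_2=\sup_{x\in S}p(x)\sqrt{{\rm E}_q\!\bigl[(\mathbb{I}_{X\ge x}-P(X))^2/p(X)^2\bigr]}.$$
The uniform bound $\kappa_2\le 1$ for all three targets reduces to the pointwise comparison $p(x)|\mathbb{I}_{y\ge x}-P(y)|\le p(y)$, which splits into $p(x)(1-P(y))\le p(y)$ for $y\ge x$ and $p(x)P(y)\le p(y)$ for $y<x$. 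Both are monotone-hazard-type estimates which hold because each of the three targets has non-increasing score $p'/p$ (respectively $-1$, $-x$, and $-x^3/3$), forcing $(1-P)/p$ to be non-increasing and $P/p$ to be non-decreasing on the support. Integrating against $q$ then gives $\kappa_2\le 1$ uniformly in $x$.

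The main obstacle is the sharpness of the Gaussian and quartic constants $\sqrt 2$ and $\sqrt{2\sqrt 2}$: any purely pointwise estimate on $f_l^p$ produces only $\sqrt{2\pi}$ or worse. The improvement rests on the Chernoff--Poincar\'e identity together with the precise tailoring $l=\mathrm{sgn}(q-p)$, which makes the transfer from the $p$-expectation appearing in the identity to the $q$-expectation in the definition of $\kappa_1$ cost no more than the stated constants.
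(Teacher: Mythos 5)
Your reduction of both constants to bounds on ${\rm E}_q[(f_l^p(X))^2]$ for the two test functions matches the paper's setup, and your $\kappa_2$ argument is correct and in fact more unified than the paper's: the paper computes ${\rm E}_q[(\mathbb{I}_{[x,b)}(X)-P(X))^2/(p(X))^2]$ explicitly for each target and bounds the resulting supremum by hand (leaving the quartic case to the reader), whereas your observation that all three targets are log-concave, so that $(1-P)/p$ is non-increasing and $P/p$ is non-decreasing, gives the pointwise bound $p(x)\,|\mathbb{I}_{\{y\ge x\}}-P(y)|\le p(y)$ and hence $\kappa_2\le 1$ in one stroke for all three cases.

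The $\kappa_1$ part, however, has a genuine gap exactly at the step you describe as ``converts into $\kappa_1\le\sqrt2$''. The Chernoff--Poincar\'e identity ${\rm E}_\phi[((f_l^\phi)'(X))^2]+{\rm E}_\phi[(f_l^\phi(X))^2]={\rm E}_\phi[(l(X)-m)^2]$ controls the second moment of $f_l^\phi$ under $p=\phi$, but $\kappa_1^2={\rm E}_q[(f_l^p(X))^2]$ is a second moment under $q$, and $q$ is an essentially arbitrary density sharing the support of $p$. Nothing in the sign structure $l={\rm sgn}(q-p)$ controls $\int (f_l^p)^2(q-p)$: squaring has destroyed the sign information, and the only uniform control available on $f_l^p$ is a sup-norm bound of magic-factor type, which yields constants far worse than $\sqrt2$ (indeed, in the exponential case the pointwise route gives only $|f_l^p|\le 2$ and hence $\kappa_1\le 2$, not $1$, so even your ``direct computation'' claim there is unsubstantiated). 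Making the bound independent of $q$ is precisely the difficulty, and the paper resolves it by a quite different device: an iterated Jensen--H\"older--rescaling scheme in which each application of H\"older's inequality consumes only the normalization $\int q=1$ and extracts a factor $2^{-1/2^k}$, the limit of the accumulated exponents producing exactly the constants $1$, $\sqrt2$ and $\sqrt{2\sqrt2}$. Without a substitute for that mechanism (or some other way to pass from ${\rm E}_p$ to ${\rm E}_q$), your proposal does not establish any of the three $\kappa_1$ bounds.
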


\begin{proof}[Proof of the constants $\kappa_1$]

Take  $l(u) = \mathbb{I}_{\{p(u) \le q(u)\}} - \mathbb{I}_{\{p(u)\ge q(u)\}}.$ Using  \eqref{eq:steineq_sol2} and the fact that  $\int_a^b(l(u) - {\rm E}_p[l(X)])p(u)du=0$, we obtain that 
\begin{align*} f_l^p(x)  & = -\frac{1}{p(x)}\int_x^b (l(u) - {\rm E}_p[l(X)] )p(u) du \\
				& =- \frac{2}{p(x)} \int_x^b \left( \mathbb{I}_{\{p(u)\le q(u)\}}- {\rm P}_p(p(X) \le q(X) \right)p(u)du\\
				& =: \frac{2}{p(x)} \int_x^b h(u)p(u)du,\end{align*}
where ${\rm P}_p(X\in A)=\int_A p(u)du$ for some set $A$. Let $p(x)= e^{-x}\mathbb{I}_{[0,\infty)}(x)$, the  density of an exponential-1 random variable (in other words, $a=0$ and $b=\infty$). Recall that, in this case, the support of $f_l^p$ is a subset of $\R^+$.  Then we can write 
\begin{align*} \kappa_1:= {\rm E}_q[(f_l^p(X))^2] & = 4 \int_{0}^\infty q(x) e^{2x}\left( \int_{x}^\infty  h(u)e^{-u} du\right)^2 dx \\
					& \le 4\int_{0}^\infty  q(x)  e^{2x} \left(\int_{x}^\infty  h^2(u)e^{-2u} du\right) dx \\
					& \le \frac{4}{2} \int_{0}^\infty  q(x)  e^{2x} \left(\int_{2x}^\infty  h^2\left(\frac{u}{2}\right)e^{-u} du\right) dx,
										\end{align*}
where the first inequality follows from Jensen and the second inequality from a simple change of variables. Applying H\"older's inequality and again changing variables in the above yields 
\begin{align*} \kappa_1 & \le \frac{4}{2} \sqrt{\int_{0}^\infty  q(x)dx}\sqrt{\int_0^\infty  q(x)e^{4x} \left(\int_{2x}^\infty  h^2\left(\frac{u}{2}\right)e^{-u} du\right)^2 dx}\\
					& \le \frac{4}{2^{1+\frac{1}{2}}} \left(\int_0^\infty  q(x)e^{4x} \left(\int_{4x}^\infty  h^4\left(\frac{u}{4}\right)e^{-u} du\right) dx\right)^{1/2},
										\end{align*}
where $\int_{0}^\infty  q(x)dx=1$ by our assumption that $p$ and $q$ share the same support. Iterating this procedure $m\in \N$ times, we obtain 
$$\kappa_1 \le \frac{4}{2^{M(m)}} \left(\int_0^\infty  q(x)e^{2^{m+1}x} \left(\int_{2^{m+1}x}^\infty  h^{2^{m+1}}\left(\frac{u}{2^{m+1}}\right)e^{-u} du\right) dx\right)^{1/2^{m}},$$
where $M(m) = 1+\frac{1}{2}+\ldots + \frac{1}{2^m}$. Now note that, for each $m\ge0$, we have  $0 \le h^{2^{m+1}}(u/2^{m+1}) \le 1$. Hence 
$$\int_{2^{m+1}x}^\infty  h^{2^{m+1}}\left(\frac{u}{2^{m+1}}\right)e^{-u} du \le e^{-2^{m+1}x}.$$
Since $M(m) \to 2$, the result follows.

If the support of $p$ (and hence also of $q$) is the real line, we use similarly as above the identity $\int_{-\infty}^\infty (l(u) - {\rm E}_p[l(X)] )p(u) du = 0$ to write, equivalently,
$$ f_l^p(x) = \frac{2}{p(x)} \int_{x}^\infty h(u)p(u)du= - \frac{2}{p(x)} \int_{-\infty}^x h(u)p(u)du.$$
This yields
\begin{align*} {\rm E}_q[(f_l^p(X))^2] & = 4 \int_{-\infty}^\infty q(x)\left( \frac{1}{p(x)}\int_{x}^\infty  h(u)p(u) du\right)^2 dx \\
					& = 4 \int_{-\infty}^0 q(x) \left( \frac{1}{p(x)} \int_{-\infty}^x  h(u)p(u) du\right)^2 dx \\
					& \quad \quad \quad + 4\int_{0}^\infty  q(x) \left( \frac{1}{p(x)} \int_{x}^\infty  h(u)p(u) du\right)^2 dx.\end{align*}
Setting $p(x) = (2\pi)^{-1/2}e^{-x^2/2}$ we get by Jensen's inequality
\begin{align*} {\rm E}_q[(f_l^p(X))^2] & \le  4 \int_{-\infty}^0 q(x)  \left(e^{x^2} \int_{-\infty}^x  h^2(u)e^{-u^2} du\right) dx \\
					& \quad \quad \quad + 4\int_{0}^\infty  q(x) \left( e^{x^2}  \int_{x}^\infty  h^2(u)e^{-u^2}  du\right) dx =:I^-+I^+.\end{align*}
Both integrals above can be tackled in the same way as for the exponential distribution. Consider, for instance, $I^-$ for which we can write (thanks to a simple change of variables)
\begin{align*}  I^- & = 4 \int_{-\infty}^0 q(x)  \left(e^{x^2} \int_{-\infty}^x  h^2(u)e^{-u^2} du\right) dx \\	
		        & = \frac{4}{\sqrt2}  \int_{-\infty}^0 q(x)  \left(e^{x^2} \int_{-\infty}^{\sqrt2 x}   h^2(u/\sqrt2)e^{-u^2/2} du\right) dx. \end{align*}
Now apply  H\"older's inequality to get 		        
\begin{align*} 		     I^-   & \le \frac{4\sqrt{p}}{\sqrt2}   \sqrt{ \int_{-\infty}^0 q(x)  \left(e^{x^2} \int_{-\infty}^{\sqrt2 x}   h^2(u/\sqrt2)e^{-u^2/2} du\right)^2dx } \\
					& \le  \frac{4\sqrt{p}}{\sqrt2}   \sqrt{ \int_{-\infty}^0 q(x)  \left(e^{2x^2} \int_{-\infty}^{\sqrt2 x}   h^4(u/\sqrt2)e^{-u^2} du\right)dx }, \end{align*}
where $p={\rm P}_q(X<0)$. Changing variables once more yields 
$$I^-\le I_1^-$$
with 
\begin{align*} 	I_1^- =	  \frac{4 p^{\frac{1}{2}}}{2^{\frac{1}{2}+\frac{1}{4}}}  \left({ \int_{-\infty}^0 q(x)  \left(e^{2x^2} \int_{-\infty}^{(\sqrt2)^2 x}   h^4\left(\frac{u}{(\sqrt2)^2}\right)e^{-u^2/2} du\right)dx }\right)^{\frac{1}{2}}. \end{align*}
Iterating this procedure $m\in \mathbb{N}$ times we deduce 
$$I^-\le I_1^-\le\ldots \le  I_m^-$$
with $I_m^-$ given by 
\begin{align*} & \frac{4p^{N(m)} }{2^{N(m+1)}}  \left(\int_{-\infty}^0 q(x)  \left(e^{2^mx^2} \int_{-\infty}^{(\sqrt2)^{m+1} x}   h^{2^{m+1}}\left(\frac{u}{(\sqrt2)^{m+1}}\right)e^{-u^2/2} du\right)dx\right)^{\frac{1}{2^m}}\end{align*}				
where we set $N(m)=\frac{1}{2}+\frac{1}{4}+\ldots+\frac{1}{2^m}(=M(m)-1)$. For every $m$ we have $0\le h^{2^{m+1}}(u/(\sqrt2)^{m+1}) \le 1$ and
$$\int_{-\infty}^0 q(x)  \left(e^{2^mx^2} \int_{-\infty}^{(\sqrt2)^{m+1} x} e^{-u^2/2} du\right)dx \le {\rm P}_q(X<0) \sqrt{\frac\pi2}.$$
Therefore 
$$I_m^- \le \frac{4}{2^{N(m+1)}}({\rm P}_q(X<0))^{N(m)}\left({\rm P}_q(X<0) \sqrt{\frac\pi2}\right)^{1/2^m}.$$
Since $N(m) \to 1$ as $m\to \infty$, we conclude 
$$I^- \le 2\,{\rm P}_q(X<0).$$
One can similarly show that $I^+ \le 2\,{\rm P}_q(X>0)$, and the result follows. 

The computations for densities proportional to $e^{-x^4/12}$ are similar and are left to the reader. 
 
\end{proof}

\begin{proof}[Proof of the constants $\kappa_2$]
Let $p(x)=e^{-x}\mathbb{I}_{[0,\infty)}(x)$, which readily implies $P(x)=(1-e^{-x})\mathbb{I}_{[0,\infty)}(x)$. This leads to
\begin{align*}
& {\rm E}_q\left[\left(\mathbb{I}_{[x, \infty)}(X) - P(X)\right)^2/(p(X))^2\right] \\
& = \int_0^\infty q(y)e^{2y}(\mathbb{I}_{[x, \infty)}(y)-1+e^{-y})^2dy\\
&= \int_0^x q(y)e^{2y}(1-e^{-y})^2dy +  \int_x^\infty q(y)e^{2y}e^{-2y}dy\\
&= \int_0^x q(y)e^{2y}(1-2e^{-y})dy +\int_0^x q(y)dy+  \int_x^\infty q(y)dy\\
&\leq 1+ e^{2x}(1-2e^{-x}){\rm P}_q(X\leq x),
\end{align*}
since $e^{2y}(1-2e^{-y})$ is a monotone increasing function on $\R^+$. This immediately yields
$$\kappa_2\leq \sup_{x\geq0}\left(e^{-x}\sqrt{1+ e^{2x}(1-2e^{-x}){\rm P}_q(X\leq x)}\right),$$
a quantity which can be bounded by 1.

Now let $p(x)=(2\pi)^{-1/2}e^{-x^2}$ and $P(x)=\Phi(x)$, the cumulative distribution function of the standard normal distribution. Similarly as for the exponential, we have
\begin{align*}
{\rm E}_q[\left(\mathbb{I}_{[x, \infty)}(X) - P(X)\right)^2/(p(X))^2]&\\
& \hspace{-4cm} = 2\pi\int_{-\infty}^\infty q(y)e^{y^2}(\mathbb{I}_{[x, \infty)}(y)-\Phi(y))^2dy\\
& \hspace{-4cm}= 2\pi\int_{-\infty}^x q(y)e^{y^2}(\Phi(y))^2dy +2\pi  \int_x^\infty q(y)e^{y^2}(1-\Phi(y))^2dy\\
& \hspace{-4cm}\leq 2\pi e^{x^2}(\Phi(x))^2\int_{-\infty}^x q(y)dy +2\pi e^{x^2}(1-\Phi(x))^2\int_x^\infty q(y)dy\\
& \hspace{-4cm}= 2\pi e^{x^2}(\Phi(x))^2 + 2\pi e^{x^2}(1-2\Phi(x)){\rm P}_q(X\geq x).
\end{align*}
This again directly leads to
\begin{align*}
\kappa_2&\leq \sup_{x\in\R}\left((2\pi)^{-1/2}e^{-x^2/2}\sqrt{2\pi e^{x^2}((\Phi(x))^2 +  (1-2\Phi(x)){\rm P}_q(X\geq x)}\right)\\
&=\sup_{x\in\R}\left(\sqrt{(\Phi(x))^2 +  (1-2\Phi(x)){\rm P}_q(X\geq x)}\right),
\end{align*}
a quantity which can be shown to equal 1.

The computations for densities proportional to $e^{-x^4/12}$ are similar and are left to the reader. 
\end{proof}


\end{document}